\newtheorem{theorem}{Theorem}[section]
\newtheorem{lemma}[theorem]{Lemma}
\newtheorem{proposition}[theorem]{Proposition}
\newtheorem{corollary}[theorem]{Corollary}
\theoremstyle{definition}
\newtheorem{example}[theorem]{Example}
\theoremstyle{remark}
\numberwithin{equation}{section}
\def\C{{\mathbb C}}
\def\P{\mathbb P_{\mathbb C}}
\def\PSL{\text{{PSL}}}
\def\SL{\text{{SL}}}
\def\SP{\text{{SP}}}
\def\GL{\text{{GL}}\,}
\def\GL{\text{{GL}}\,}
\title[Exceptional sets]{Exceptional Algebraic sets for infinite discrete groups of $PSL(3,\Bbb{C})$} 
\author{Angel Cano}
\address{ UCIM, Av. Universidad s/n. Col. Lomas de Chamilpa, C.P. 62210, Cuernavaca, Morelos, M\'exico.}
\email{angelcano@im.unam.mx}
\author{ Luis Loeza }
\address{ IIT UACJ, Av. del Charro no. 450 Nte. Col. Partido Romero CP 32310,  Ciudad Juárez, Chihuahua, M\'exico.}
\email{luis.loeza@uacj.mx}
\thanks{Partially supported by grants of projects PAPPIT UNAM:  IN110219} 
\subjclass{Primary 37F99,  Secondary 30F40, 20H10, 57M60}
\begin{document}

	\begin{abstract}
In this  note we show that the exceptional algebraic  set for   an infinite discrete group in $PSL(3,\C)$ should be a finite union of: complex lines, copies of the Veronese curve or copies of the cubic $xy^2-z^3$.
	\end{abstract}

	\maketitle 
	

	\section*{Introduction}
	
	Complex Kleinian groups first appeared in mathematics with the works of Henri Poincaré, as a way to qualitatively study the solutions of ordinary differential equations of order two, one can say that the success of  Poincaré was because he  managed to establish a dictionary between differential equations and group actions. Subsequently, this theory achieved a new boom with the introduction of quasi-conformal maps and the discovering of bridges between hyperbolic three-manifolds this theory. At the beginning of the '90s  A. Verjovsky and J. Seade began studying, see \cite{SV}, the discrete groups of projective transformations that act in projective spaces, as a proposal to establish a dictionary between actions of discrete groups and the theory of foliations, partial and  ordinary differential equations. The purpose of this note is to understand those infinite discrete groups  whose dynamic could be described in terms of a group  acting in an algebraic curve  in the complex-projective plane,{\it i. e.} Kleinian groups leaving an  invariant algebraic curve,  analogous results but in the case of iteration holomorphic maps in $\P^n$ have been studied extensively, see \cite{BCS,CLi,FS}.

	Recall that in the classical case,  see  \cite{Gr}, a set in $\P^1$ is said to be exceptional for the action of an infinite  group $G\subset \PSL(2,\C)$, discrete or not, if it is invariant under the action of $G$ and is a finite set. In analogy with this, we will say that $S\subset P^2$ is an  {\it exceptional algebraic set}  for the action of an infinite group, in our case discrete, if it is $G$-invariant and a complex algebraic curve, compare with the definition of algebraically mixing in \cite{SV}. As we will see, the geometry of  the exceptional algebraic sets  is very restricted and in consequence, the class of groups with an exceptional set is   small, more precisely in this article, we show:

		\begin{theorem} \label{t:main1}
		Let $G\subset PSL(\C)$ be an infinite discrete group and $S$ a complex algebraic surface invariant under $G$, if $S_0$ is an irreducible component of $S$, then  $S_0$ is a complex line, the Veronese curve or the projective curve induced by the polynomial $p(x,y,z)=xy^2-z^3$.
		\end{theorem}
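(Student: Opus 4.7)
The plan is to pass to the normalization $\nu:\tilde S_0\to S_0$ and case-split on the geometric genus $g$ of $\tilde S_0$. Since $S_0$ is $G$-invariant, the action lifts uniquely to a biholomorphic action of $G$ on $\tilde S_0$, giving a homomorphism $\rho:G\to\mathrm{Aut}(\tilde S_0)$. A key preliminary point is that any element of $PSL(3,\C)$ fixing a plane curve of degree at least two pointwise must fix four points in general position and is therefore the identity; for $\deg S_0\ge 2$, $\rho$ is thus injective and $\rho(G)$ is infinite.

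If $g\ge 2$, Hurwitz's theorem gives $\mathrm{Aut}(\tilde S_0)$ finite, forcing $G$ finite -- a contradiction. If $g=1$, then $\tilde S_0$ is an elliptic curve and $\mathrm{Aut}(\tilde S_0)=\tilde S_0\rtimes\mathrm{Aut}_0$ is a compact Lie group. The stabilizer of $S_0$ in $PSL(3,\C)$ is an algebraic subgroup, and $\rho$ restricted to it is an injective algebraic homomorphism with closed (hence compact) image; so this stabilizer is itself compact, and $G$, being a discrete subgroup of a compact Lie group, is finite -- again a contradiction.

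The heart of the argument is $g=0$. Here $\tilde S_0\cong\mathbb P^1$ and $S_0$ is rational of degree $d$. The cases $d=1$ and $d=2$ give, respectively, a complex line and a smooth plane conic (projectively equivalent to the Veronese curve, the first two conclusions). For $d\ge 3$, parametrize $\phi:\mathbb P^1\to S_0$ by three binary forms $(f_0,f_1,f_2)$ of degree $d$; the $G$-invariance of $S_0$ translates into the three-dimensional subspace $V:=\mathrm{span}(f_0,f_1,f_2)\subset S^d(\C^2)^{\ast}$ being $\rho(G)$-invariant. Since $S^d(\C^2)^{\ast}$ is an irreducible $PSL(2,\C)$-module, the Zariski closure $H$ of $\rho(G)$ must be a proper algebraic subgroup; being positive-dimensional (as $\rho(G)$ is infinite discrete), up to conjugation $H$ lies inside a Borel subgroup. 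In diagonalising coordinates the invariant subspaces of $S^d(\C^2)^{\ast}$ are precisely the monomial spans, and the conditions that $\phi$ be a morphism birational onto its image fix the exponents to produce a monomial curve of the form $x^b z^{d-b}=y^d$.

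The main obstacle is the final reduction to $d=3$ with $S_0=\{xy^2-z^3=0\}$. The induced generator of the cyclic part of $\rho(G)$ lifts in suitable coordinates to $\mathrm{diag}(\mu^d,\mu^{d-b},1)\in PSL(3,\C)$, and combining a careful discreteness analysis of this lift with the compatibility of the extension $\rho(G)\to PSL(3,\C)$ on the whole Zariski closure -- including normalizer components (the swap involution appearing in the nodal-cubic picture) and the unipotent/parabolic sub-case -- should rule out $d\ge 4$, the nodal cubic, and any other monomial variant, leaving only the cuspidal cubic up to projective equivalence. This combinatorial and dynamical bookkeeping is where I would expect the bulk of the technical work to lie.
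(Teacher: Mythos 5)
Your skeleton (normalize $S_0$, trichotomize on the geometric genus, kill $g\ge 2$ by Hurwitz, exclude $g=1$, and for $g=0$, $d\ge 3$ use the irreducibility of $S^d(\C^2)$ as an $\SL(2,\C)$-module to force the parametrizing net of binary forms into a monomial span) is coherent and runs parallel to the paper's route: there, Lemma \ref{l:fp} extracts from a loxodromic or parabolic element an infinite-order automorphism of $S_0$ with a fixed point (whence genus $0$), and the singular case is then pinned down through the dual curve and the Clebsch and Pl\"ucker formulas (Lemmas \ref{l:genus}, \ref{l:cs}, \ref{l:cc}) rather than through binary forms. Your $g=1$ exclusion via algebraicity and compactness of the stabilizer is different from the paper's (automorphisms of a genus-one surface sharing a fixed point form a finite group) but is repairable; the injectivity remark for $\deg S_0\ge 2$ is fine.

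The genuine gap is exactly the step you flag and postpone: eliminating the monomial curves with $d\ge 4$ is not carried out, and in fact no ``careful discreteness analysis'' can carry it out, because those curves do admit infinite discrete invariance groups. For $d\ge 4$ consider $C_d=\{x^{d-1}z=y^{d}\}$, the birational image of $[s:t]\mapsto[s^{d}:s^{d-1}t:t^{d}]$; it is irreducible of degree $d$, and $g_\lambda=\mathrm{diag}(\lambda^{d},\lambda^{d-1},1)$ satisfies $g_\lambda(C_d)=C_d$ for every $\lambda\in\C^*$ since the defining polynomial is weighted-homogeneous for these weights. Taking $|\lambda|\neq 1$, the infinite cyclic group $\langle g_\lambda\rangle$ is discrete and leaves $C_d$ invariant, so $C_d$ survives every test in your proposed endgame (the nodal cubic, by contrast, can be excluded, since its linear stabilizer is finite). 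Thus the final reduction to the cuspidal cubic is not a deferred computation but a step that fails as stated. Note that this family also touches the delicate point of the paper's own argument: Claim 2 in the proof of Lemma \ref{l:cs} asserts that a unibranch singular point must be an ordinary cusp, whereas $C_4$ has the unibranch singularity $x^3=y^4$ at $[0:0:1]$, so the Clebsch--Pl\"ucker bookkeeping used there does not apply to it either. Any complete argument must either rule out these weighted-homogeneous curves by some input not yet used, or else admit them into the conclusion.
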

	\begin{theorem} \label{t:main2}
		Let $G\subset PSL(\C)$ be an  infinite discrete group. If  $G$ is a non-virtually commutative group and  $S$ is a complex algebraic surface invariant under $G$, then $S$ is either a finite union of lines  or 
	 the Veronese curve. Moreover, if $S$ has at least four lines, then  $S$ contains at most three non-concurrent lines.
	 	\end{theorem}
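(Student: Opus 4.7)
The plan is to combine Theorem~\ref{t:main1} with direct stabilizer computations in $PSL(3,\C)$. By Theorem~\ref{t:main1}, each irreducible component of $S$ is a line, a Veronese curve, or the cuspidal cubic $C_{0}=\{xy^{2}-z^{3}=0\}$, so the argument reduces to excluding $C_{0}$, ruling out the Veronese appearing together with any other component, and then analyzing line configurations.

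To eliminate $C_{0}$, I would use its rational parametrization $[t:s]\mapsto [t^{3}:s^{3}:ts^{2}]$: the curve has a unique cusp at $[1:0:0]$ and an inflection at $[0:1:0]$, and any projective self-map of $C_{0}$ must preserve both points and the distinguished tangent lines there. A diagonal-matrix computation then pins down the stabilizer of $C_{0}$ in $PSL(3,\C)$ as the one-parameter subgroup $\{[x:y:z]\mapsto [x:t^{3}y:t^{2}z]\mid t\in \C^{*}\}\cong\C^{*}$. Every discrete infinite subgroup of $\C^{*}$ is virtually cyclic, hence virtually commutative, contradicting the hypothesis on $G$; so $C_{0}$ cannot be a component of $S$.

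If an irreducible component $V\subset S$ is a Veronese curve, the stabilizer of $V$ in $PSL(3,\C)$ is the image of $PSL(2,\C)$ under the symmetric-square embedding, and that embedding is an isomorphism onto its image. Thus the restriction of the $G$-action to $V\cong\P^{1}$ gives a faithful discrete embedding $G\hookrightarrow PSL(2,\C)$ whose image inherits non-virtual-commutativity, i.e., is non-elementary. A non-elementary Kleinian subgroup of $PSL(2,\C)$ admits no nonempty finite invariant subset of $\P^{1}$. If $C'\neq V$ were another irreducible component of $S$, then by B\'{e}zout the set $V\cap C'$ would be a nonempty finite $G$-invariant subset of $V$, a contradiction. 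Hence $S=V$ in this case.

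Finally assume $S=L_{1}\cup\cdots\cup L_{n}$ is a finite union of lines with $n\ge 4$. Let $G_{0}\le G$ be the kernel of the permutation action $G\to\Sigma_{n}$; it has finite index in $G$, inherits non-virtual-commutativity, and fixes each $L_{i}$ individually. The crux is a case analysis of the common $PSL(3,\C)$-stabilizer of any four of the invariant lines: if four of them are in general position (no three concurrent), placing them at $\{x=0\},\{y=0\},\{z=0\},\{x+y+z=0\}$ forces any common-stabilizer matrix to be a scalar, hence trivial in $PSL(3,\C)$; if three are concurrent at a point $p$ with a fourth missing $p$, a direct coordinate computation identifies the common stabilizer as a one-dimensional $\C^{*}$-subgroup. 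In each configuration $G_{0}$ would be at most virtually cyclic, contradicting the hypothesis. A short combinatorial incidence argument then promotes these local obstructions to the global ``moreover'' statement that at most three of the $L_{i}$ can fail to pass through a single common point. The main technical obstacle is precisely this final passage: one must check that the two local (four-line) calculations cover every possible subconfiguration, and then carry out the incidence book-keeping that upgrades them to the stated global bound on the number of non-concurrent lines.
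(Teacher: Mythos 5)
Your proposal is correct, and its skeleton coincides with the paper's: decompose $S$ into irreducible components, pass to the finite-index subgroup $G_0$ stabilizing each component, invoke Theorem \ref{t:main1}, discard the cuspidal cubic because its projective stabilizer is the one-parameter diagonal group of Example \ref{e:cubic} (this is the content of Lemma \ref{l:cc}), and rule out ``Veronese plus another component'' by producing a nonempty finite invariant set on the conic. Where you genuinely diverge is precisely at the two points where the paper is least explicit, and your versions are defensible replacements. For the Veronese step the paper simply says that Lemma \ref{l:fp} forces $G_0$ to be virtually commutative; your route --- the stabilizer of the conic is the faithfully embedded copy of $\PSL(2,\C)$, so the restricted group is a discrete, non-virtually-commutative, hence non-elementary subgroup of $\PSL(2,\C)$, which admits no finite nonempty invariant subset of $\P^1$ --- is more transparent and is what that sentence really amounts to. For the ``moreover'' clause the paper cites Proposition 5.15 of Barrera--Cano--Navarrete, whereas you prove it by direct linear algebra: four lines in general position have trivial common stabilizer (contradicting that $G$ is infinite), and ``three concurrent plus one transversal'' has common stabilizer isomorphic to $\C^*$ (contradicting non-virtual commutativity); this buys a self-contained argument and in fact the stronger conclusion that, once $S$ contains at least four lines, all of them are concurrent. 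Two small repairs are needed but are not gaps. First, in the Veronese paragraph the set $V\cap C'$ is a priori invariant only under the finite-index subgroup of $G$ preserving $V$ (and the union of the other components), not under $G$ itself; this is harmless because virtual commutativity and non-elementarity pass to finite-index subgroups, but the statement should be phrased for that subgroup, as you already do in the lines case with $G_0$. Second, the ``main technical obstacle'' you flag at the end is actually immediate: four distinct lines in $\P^2$ are either all concurrent, exactly three concurrent with the fourth missing the common point, or in general position (two distinct concurrent triples among four lines would force two of the lines to coincide), so excluding the last two types leaves only pencils and no further incidence bookkeeping is required.
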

		\begin{theorem} \label{t:main3}
		Let $G\subset PSL(3,\C)$ be infinite  discrete  groups. If  $G$ is virtually cyclic group and   $S$ a complex algebraic surface invariant under $G$, then $S$ is:
		\begin{enumerate}
			\item  A finite union of lines. Moreover, in  such a  collection of lines    the largest number of non-concurrent lines is three. 
						\item  A finite union of copies of the  Veronese curve and a finite union (possibly empty) of lines 
						which are  either tangent and secant to the copies of the Veronese curve.  Moreover the number of lines  which are tangents does not exceed two and the number of secants is at most one. 
				\item  A finite union of copies of the cubic induced by the polynomial $xy^2-z^3$ and  finite union (possibly empty) on tangent and secant lines.  Moreover the number of tangents does not exceed two and the number of secants is at most one. 
		\end{enumerate}
	\end{theorem}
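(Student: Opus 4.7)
The plan is to pass from $G$ to a finite-index cyclic subgroup $G_0=\langle g\rangle$, which exists by the virtual cyclicity hypothesis, and then analyze the $G_0$-invariant components of $S$ in terms of a Jordan normal form of a lift of $g$ to $\SL(3,\C)$. By Theorem~\ref{t:main1}, each irreducible component of $S$ is either a complex line, a copy of the Veronese curve, or a copy of the cubic $xy^2-z^3$. Since the finite quotient $G/G_0$ permutes $G_0$-invariant components, $S$ has finitely many components, and it suffices to describe which combinations of these three types can coexist as $g$-invariant curves; the three cases in the theorem simply segregate the statement according to which non-linear component (none, Veronese, or cubic) appears.

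First I would settle case~(1), in which every component is a line. Using the standard classification of invariant lines of an element $g\in\PSL(3,\C)$ — three lines in the regular diagonalizable case, a pencil plus one extra line when two eigenvalues coincide, and a pencil alone in parabolic cases — I would argue that beyond the at-most-three fixed points of $g$ no new non-concurrent line can appear, so however many invariant lines accumulate, at most three of them can be in general position. This matches the stated bound and the extremal configuration is the coordinate triangle of a diagonal loxodromic generator.

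For cases~(2) and~(3) I would proceed by a common principle. A Veronese or cubic component $C$ invariant under $g$ forces $g$ to act on $C$ as a projective automorphism of $C\cong\P^1$, so $g|_C$ is determined by an element of $\PSL(2,\C)$; this in turn constrains the eigenvalues of $g$ up to a single scalar and fixes the stratum of its Jordan class. Any $g$-invariant line $L$ meeting $C$ intersects it in a $g$-invariant $0$-dimensional subscheme of $L\cap C$, hence meets $C$ only at $g$-fixed points of $C$ counted with multiplicity. Since a non-identity element of $\PSL(2,\C)$ has at most two fixed points on $\P^1$, any such $L$ is either tangent to $C$ at a $g$-fixed point (at most two choices) or secant through the two $g$-fixed points (unique if it exists). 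This directly gives the bounds ``at most two tangents and at most one secant'' claimed in the theorem, and the same argument applies verbatim to the rational cubic $xy^2=z^3$ with its standard parametrization.

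The main obstacle is the careful Jordan-by-Jordan bookkeeping: for each conjugacy class of $g$ (loxodromic, elliptic, mixed-diagonalizable with coincident eigenvalues, parabolic of type $(2,1)$, and unipotent of type $(3)$) one must decide which of the three kinds of components are simultaneously compatible, and must invoke discreteness of $G$ to exclude elliptic generators of irrational rotation and to constrain the scaling ratios of loxodromic generators. This is where I expect the technical work to concentrate, both to eliminate spurious configurations mixing, for example, multiple non-parallel Veronese copies or a Veronese and a cubic component, and to exhibit explicit generators realizing the extremal cases, thereby showing the bounds on tangents and secants are sharp.
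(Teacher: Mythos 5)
Your overall route is the one the paper intends (the paper itself omits this proof as ``trivial'', leaning on Theorem~\ref{t:main1}, Lemma~\ref{l:cc} and the examples), and your central counting argument is essentially right once two small repairs are made. First, the irreducible components of $S$ need not be invariant under $G_0=\langle g\rangle$ merely because $G_0$ has finite index; you must replace $g$ by a power $g^m$ stabilizing each component (harmless, since $g^m$ still has infinite order). Second, a finite $g$-invariant set such as $L\cap C$ consists a priori of \emph{periodic} points, not fixed points; you need that $g|_C$ has infinite order -- which holds because an element of $\PSL(3,\C)$ fixing pointwise an irreducible curve of degree $\ge 2$ is the identity (or by Lemma~\ref{l:fp}) -- so that periodic points of the induced M\"obius map coincide with its at most two fixed points. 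With that, ``an invariant line meets $C$ only in fixed points of $g|_C$, hence at most two tangents and one secant'' is correct for the conic and, with the multiplicity bookkeeping at the cusp, for the cubic; also, a regular unipotent element has a single invariant line rather than a pencil, but this only helps the bound of three non-concurrent lines in case (1).

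The genuine gap is that the trichotomy itself is left to your deferred ``Jordan-by-Jordan bookkeeping'': to obtain the statement as phrased you must rule out an $S$ containing simultaneously a copy of the Veronese curve and a copy of the cuspidal cubic, and you never do so. In the paper this is immediate from Lemma~\ref{l:cc}: the stabilizer of a cuspidal cubic component is (up to conjugation and finite index) inside the group $\C_p^*$ of Example~\ref{e:cubic}, and an infinite-order element $\mathrm{diag}(a^{-5},a^4,a)$ with $a$ not a root of unity leaves no irreducible conic invariant, since the six monomial weights $a^{-10},a^{8},a^{2},a^{-1},a^{-4},a^{5}$ are pairwise distinct, forcing any invariant conic to be a single (degenerate) monomial. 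You should supply this (or an equivalent) argument rather than postpone it. Conversely, part of the bookkeeping you announce aims at a non-issue: the theorem does \emph{not} exclude several Veronese copies or several cubics -- the conics $y^2=c\,xz$ are all invariant under $\mathrm{diag}(a,1,a^{-1})$ and the cubics $xy^2=c\,z^3$ under $\C_p^*$ -- so ``eliminating multiple non-parallel Veronese copies'' would be proving something false; what matters is only that all invariant conics (resp.\ cubics) of a fixed infinite-order $g$ share the same fixed points, so the tangent/secant count is unaffected. Sharpness of the bounds is not claimed in the statement, so the extremal generators you promise are optional.
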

	\begin{corollary} \label{c:main}
	Let $G\subset PSL(3,\C)$ be a discrete group with an algebraic exceptional set, then $G$ is either  virtually affine or is the  representation of a Kleinian group of  Möbius transformations  though  the irreducible representation of $Mob(\hat \C)$ into $PSL(3,\C)$, see example   \ref{e:ver}  below.
	\end{corollary}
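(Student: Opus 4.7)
The plan is to do a case analysis on the types of irreducible components of the exceptional set $S$, using Theorems \ref{t:main1}--\ref{t:main3} as the main inputs. First observe that $S$ has only finitely many irreducible components and $G$ permutes them, so by passing to a finite-index normal subgroup $G_0 \triangleleft G$ we may assume that each component of $S$ is individually $G_0$-invariant. This reduction preserves both possible conclusions of the corollary.

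\textbf{Case A: some component is a line.} If a component $\ell$ is a line, then $G_0$ preserves $\ell$. The $\PSL(3,\C)$-stabiliser of a line is a maximal parabolic subgroup, isomorphic to the complex affine group $\Aff(\C^2)$ once $\ell$ is viewed as the line at infinity. Hence $G_0 \subset \Aff(\C^2)$ and $G$ is virtually affine. Note that this case already absorbs every mixed configuration of Theorem \ref{t:main3} that contains a tangent or secant line.

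\textbf{Case B: no component is a line, but some component is the cuspidal cubic.} Theorem \ref{t:main2} then forces $G$ to be virtually commutative, and Theorem \ref{t:main3} refines this to virtually cyclic. Any cyclic subgroup of $\PSL(3,\C)$ stabilises a line (take the projectivisation of any $2$-dimensional invariant subspace of a generator, which exists by Jordan form), so $G$ is virtually affine. More concretely, parametrising the smooth part of $xy^2 = z^3$ by $t \mapsto [t^3:1:t]$, a direct computation shows that the subgroup of $\PSL(3,\C)$ preserving the cubic and its cusp $[1:0:0]$ is the one-parameter group $t \mapsto at$ ($a \in \C^*$), which lies inside the affine stabiliser of the cuspidal tangent line $\{y = 0\}$.

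\textbf{Case C: every component is a Veronese curve.} If $G$ is virtually cyclic it is virtually affine by the argument of Case B. Otherwise $G$ is not virtually commutative, so Theorem \ref{t:main2} forces $S$ to be a single copy of the Veronese curve $V$. The projective stabiliser of $V$ in $\PSL(3,\C)$ is classically the image of the irreducible representation $\iota : \PSL(2,\C) \hookrightarrow \PSL(3,\C)$; therefore $G_0 \subset \iota(\PSL(2,\C))$, and since $G_0$ is discrete in $\PSL(3,\C)$ it is also discrete in $\iota(\PSL(2,\C))$. Consequently $\iota^{-1}(G_0) \subset \PSL(2,\C) \cong \Mob(\hat\C)$ is a Kleinian group of Möbius transformations whose $\iota$-representation recovers $G_0$, and hence $G$ (up to finite index) is of the second type described in the corollary.

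\textbf{Main obstacle.} The technical heart of the argument is the explicit identification, inside $\PSL(3,\C)$, of the projective automorphism groups of the two exceptional rational curves arising in Theorem \ref{t:main1}: that the full stabiliser of the Veronese curve coincides with $\iota(\PSL(2,\C))$, and that the stabiliser of the cuspidal cubic sits inside an affine subgroup. Both are explicit calculations but need to be done carefully in order to rule out exotic projective automorphisms; the cuspidal case in particular requires one to exploit the forced invariance of the singular point in order to cut the naive parametrisation symmetry $t \mapsto at+b$ down to $t \mapsto at$. Everything else is bookkeeping assembled from Theorems \ref{t:main1}--\ref{t:main3}.
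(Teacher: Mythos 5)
Your overall route is the one the paper has in mind (the paper itself omits the proof as ``trivial''): components are lines, cuspidal cubics or conics by Theorem \ref{t:main1}; the stabiliser of a line in $\PSL(3,\C)$ is the affine group of its complement; the stabiliser of the cuspidal cubic is the diagonal one-parameter group, which is exactly Lemma \ref{l:cc} and should be cited rather than announced as ``a direct computation''; and the stabiliser of a smooth conic is $\iota(\PSL(2,\C))$. Cases A and B are essentially fine, except that the sentence ``Theorem \ref{t:main3} refines this to virtually cyclic'' is a misreading: virtual cyclicity is a \emph{hypothesis} of Theorem \ref{t:main3}, not a conclusion. In Case B the correct justification is Lemma \ref{l:cc}, which conjugates $G$ into $\C_p^*$, a diagonal and hence affine group, so the detour through commutativity is unnecessary.

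The genuine gap is in Case C. From ``$G$ is not virtually cyclic'' you infer ``$G$ is not virtually commutative''; this implication is false (a virtually $\Z^{2}$ group is the obvious counterexample), so the subcase in which $G$ is virtually commutative but not virtually cyclic and $S$ is a union of \emph{several} Veronese conics is covered by none of Theorems \ref{t:main1}--\ref{t:main3} as you invoke them, and your argument simply skips it. You must close it separately, for instance by showing that an infinite commutative subgroup of $\PSL(3,\C)$ admits commuting lifts (a pair of lifts whose commutator is a nontrivial cube root of unity generates a finite Heisenberg group with finite centraliser, forcing the group to be finite), hence is simultaneously triangularisable, preserves a line and is therefore affine; alternatively, show directly that a finite union of two or more conics cannot be invariant under an infinite discrete group that is not virtually cyclic. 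A second, smaller point: your finite-index reduction does not automatically preserve the second alternative of the corollary, which is a statement about $G$ itself; but when Theorem \ref{t:main2} applies, $S$ \emph{is} the Veronese curve and is invariant under all of $G$, so $G$ itself lies in $\iota(\PSL(2,\C))$ and the ``up to finite index'' qualifier in your conclusion should be removed.
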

	
	The paper is organized as follows: Section \ref{s:nb} reviews some elementary well-known facts on two-dimensional complex Kleinain groups, see \cite{CNS} for  extra information. In Section   \ref{s:examples}, we provide a collection of   examples that  depicts all the possible ways to constructing groups with exceptional algebraic surfaces as well as the respective surfaces.  
		In Section \ref{GDC} we show that every irreducible component of an exceptional algebraic set is a line, a copy of the Veronese curve of a cubic with a cusp as a singularity, the proof of this fact relies strongly in the use of the Plücker Formulas as well as in the Hurwitz theorems for curves, most of the material on  algebraic curves used in this note  is well known but the interested reader could see \cite{Fischer, miranda, sha} for full details. Finally  in section \ref{s:main}  we prove the main theorem of this article.

	\section{Preliminaries } \label{s:nb}
	In  this section we  establish some elementary facts  that we use in the sequel.
	
	\subsection{Projective Geometry}
	The complex projective plane
	$\mathbb{P}^2_{\mathbb{C}}$ is the quotient space 
	$(\mathbb{C}^{3}-
	\{{\bf 0}\})/\mathbb{C}^*
	,$
	where $\mathbb{C}^*$ acts on $\mathbb{C}^3-\{{\bf 0}\}$ by the
	usual scalar
	multiplication. Let $[\mbox{
	}]:\mathbb{C}^{3}-\{{\bf 0}\}\rightarrow
	\mathbb{P}^{2}_{\mathbb{C}}$ be the quotient map. A set
	$\ell\subset \mathbb{P}^2_{\mathbb{C}}$ is said to be a
	complex line if $[\ell]^{-1}\cup \{{\bf 0}\}$ is a complex linear
	subspace of dimension $2$. Given  $p,q\in
	\mathbb{P}^2_{\mathbb{C}}$ distinct points, there exists  a unique
	complex line passing through  $p$ and $q$, such line is 
	denoted by $\overleftrightarrow{p,q}$. The set of all  
	complex lines   in $\P^2$, denoted $Gr(\P^2)$, equipped
	with the topology of the Hausdorff convergence, actually is 
	diffeomorphic to  $\P^2$ and it is its projective dual $ \P^{*2} \cong Gr(\P^2)$.

	Consider the action of $\mathbb{Z}_{3}$ (viewed as the cubic
	roots of
	the unity) on  $\SL(3,\mathbb{C})$ given by the usual scalar
	multiplication. Then
	$$\PSL(3,\mathbb{C})=\SL(3,\mathbb{C})/\mathbb{Z}_{3}\,,$$ is a
	Lie group
	whose elements are called projective transformations.  Let
	$[[\mbox{ }]]:\SL(3,\mathbb{C})\rightarrow \PSL(3,\mathbb{C})$ be
	the
	quotient map, $g\in \PSL(3,\mathbb{C})$ and
	${\bf g}\in
	\GL(3,\mathbb{C})$. We say that ${\bf g}$ is a lift of
	$g$ if there exists a cubic root $\alpha$ of $Det({\bf g})$ such
	that $[\alpha^{-1} {\bf g}]=g$, by abuse of notation in the following we will use $[\mbox{ }] $ instead $[[\mbox{ }]] $ . We use the notation
	$(g_{ij})$ to denote elements in $\SL(3,\Bbb{C})$. It is easy to  show that
	$ \PSL(3,\mathbb{C})$  acts  transitively,
	effectively and by biholomorphisms on
	$\mathbb{P}^2_{\mathbb{C}}$
	by $[{\bf g }]([w])=[{\bf g }(w)]$, where $w\in
	\mathbb{C}^3-\{{\bf 0}\}$ and ${\bf g }\in \GL(3,\mathbb{C})$.\\
	
	If  $g$ is an element  in $\PSL(3,\C)$ and ${\bf g}\in \SL(3,\C)$ is a lift of $g$,  we say , see \cite{CNS},  that:
	
	\begin{itemize}
		\item $g$ is a elliptic  if ${\bf g }$ is diagonalizable with unitary eigenvalues. 
		
		\item $g$ is parabolic   if ${\bf g }$ is non-diagonalizable with unitary eigenvalues. 
		
		\item $g$ is loxodromic    if  ${\bf g }$ has some non-unitary eigenvalue.
	\end{itemize}
Clearly this definition does not depend on the choice of the lift.\\

\begin{lemma} [See Lemma 6.6 in \cite{CS}]  
	 \label{l:inford}
	Let $G$ be an infinite discrete group, then $G$ contains an element $g$  with  infinite order.  Moreover, $g$ is either  parabolic or loxodromic.
\end{lemma}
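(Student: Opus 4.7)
The statement decomposes into two parts: (a) existence of an infinite-order element in $G$, and (b) any such element is necessarily parabolic or loxodromic. Part (b) is the easier one, so I would treat it first. Suppose $g\in G$ were elliptic with infinite order. A lift of $g$ to $SL(3,\C)$ is diagonalizable with three unit-modulus eigenvalues, so after conjugation $\langle g\rangle$ sits inside the compact torus $T=\{\mathrm{diag}(\mu_1,\mu_2,\mu_3):|\mu_i|=1\}/\Z_3\subset PSL(3,\C)$. Any infinite subset of a compact set must accumulate, contradicting the discreteness of $\langle g\rangle\subset G$. Hence in any discrete subgroup of $PSL(3,\C)$ every elliptic element has finite order, and consequently any element of infinite order is automatically parabolic or loxodromic.

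For (a), I would argue by contradiction: assume every element of $G$ has finite order, hence (by the classification of elements in $PSL(3,\C)$) every element is elliptic. The plan is then to derive that $G$ must be finite. First, the Burnside--Schur theorem says that every finitely generated torsion subgroup of $GL(3,\C)$ is finite; hence every finitely generated subgroup of $G$ is finite, i.e.\ $G$ is locally finite. Next, Jordan's theorem furnishes a universal constant $J=J(3)$ such that every finite subgroup of $GL(3,\C)$ contains a normal abelian subgroup of index at most $J$. A bootstrap along an increasing exhaustion $H_1\subset H_2\subset\cdots$ of $G$ by finitely generated subgroups (which exists since $G$ is countable, being discrete in the separable Lie group $PSL(3,\C)$), together with the Jordan bound, yields a normal abelian subgroup $A\trianglelefteq G$ with $[G:A]\le J$, whose elements remain elliptic of finite order.

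Finally, since $A$ is abelian and all its elements are diagonalizable with unit-modulus eigenvalues, commuting diagonalizables share a common eigenbasis; after a single change of basis $A$ lies in the compact torus $T$ above. Being discrete in $PSL(3,\C)$, the group $A$ is then a discrete subset of a compact set, so $A$ is finite. Combined with $[G:A]\le J$ this yields $|G|<\infty$, contradicting the hypothesis. Therefore $G$ contains an element of infinite order, which by (b) is parabolic or loxodromic.

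The main obstacle I expect is the bootstrapping step in the middle paragraph: converting the Jordan bound on each finitely generated $H_n$ into a single normal abelian subgroup of $G$ of uniformly bounded index. The abelian parts $A_n\trianglelefteq H_n$ need not be compatible across $n$, so one must arrange compatibility (for example by replacing $A_n$ by $A_n\cap A_{n+1}\cap H_n$, which remains abelian and normal in $H_n$ with index still bounded by $J$) and check that this stabilizes. This is standard but finicky, and it is essentially where all the bookkeeping lives; every other step is a direct appeal to a well-known structural result.
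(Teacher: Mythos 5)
The paper does not actually prove this lemma; it is quoted with a citation to Lemma 6.6 of \cite{CS}, so there is no in-text argument to compare yours against, and your proof has to stand on its own. In outline it does: part (b) via the observation that an elliptic element of infinite order generates a non-discrete (relatively compact) cyclic group is correct, and part (a) via ``torsion discrete implies finite'', using Schur's theorem for local finiteness, Jordan's bound, and a compactness argument for the abelian part, is a sound and natural route.

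Two points need repair, one of them a genuine (though easily fixable) error. First, the simultaneous-diagonalization step is false as stated in $PSL(3,\C)$: commutation there only forces lifts to commute up to a cube root of unity, and commuting projective classes of diagonalizable elements need not share an eigenbasis --- the clock and shift matrices commute in $PSL(3,\C)$ but generate a group that lies in no torus. So your abelian subgroup $A$ need not be conjugable into $T$. The cheap fix is to run the entire argument in the preimage $\widehat{G}\subset SL(3,\C)$: it is discrete (the covering $SL(3,\C)\to PSL(3,\C)$ is $3$-to-$1$) and torsion (if $g^{n}=\omega I$ with $\omega^{3}=1$ then $g^{3n}=I$), it is an honest linear group so Schur and Jordan apply to it directly (as written you quote them for $GL(3,\C)$ but apply them to the projective group $G$), and the abelian normal subgroup it yields consists of matrices that genuinely commute, so simultaneous diagonalization, the compact torus, and discreteness give finiteness of $\widehat{G}$, hence of $G$. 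Second, the bootstrapping you flag as finicky is precisely the Jordan--Schur theorem (every torsion subgroup of $GL(n,\C)$ contains a normal abelian subgroup of index bounded by a constant depending only on $n$), so you may cite it outright; your proposed replacement $A_n\cap A_{n+1}\cap H_n$ does not obviously preserve the index bound, so citing the classical theorem is both shorter and safer.
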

It is possible  to decide if the element in the previous lemma  is parabolic or loxodromic but one should impose dynamic restrictions over  $G$, a detailed discussion of this conditions   goes beyond the scope of this paper, the interested reader see \cite{BCNS1}.

Let $M(3,\C) $ be the set of all $3 \times
3$ matrices with complex coefficients.
Define the space of  pseudo-projective maps by:
$\SP(3,\C)=(M(3,\C)-\{{\bf 0}\})/\mathbb{C}^*,$
where $\mathbb{C}^*$ acts on $M(3,\C)-\{{\bf 0}\}$ by the
usual scalar
multiplication. We have the  quotient map $[\mbox{ }]:M(3,\C)-\{{\bf 0}\}\rightarrow
\SP(3,\C)$. Given
$P\in \SP(3,\C)$ we define its kernel by:
$$Ker(P)=[Ker({\bf P})-\{{\bf 0}\}],$$
where ${\bf P }\in M(3,\C)$ is a lift of $P$.   
Clearly $\PSL(3,\Bbb{C})\subset \SP(3,\Bbb{C})$ and an element $P$ in $\SP(3,\C)$ is in 
$\PSL(3,\C)$ if and only if $Ker(P)=\emptyset$. Notice that $\SP(3,\C)$ is a  manifold naturally diffeomorphic to $\P^8$, so it is compact.  For each $P\in \SP(3,\C)$  with lift $p\in M(3,\C)-\{{\bf 0}\}$ we can define  an holomorphic funtion $P:\P^2-Ker(P)\rightarrow \P^2$ by $P[w]=[p(w)]$.
The following lemma relates the convergence of projective maps as pseudo-projective maps and the converge holomorphic functions, also can  be considered as a generalization of the convergence property of Möbius transformations, see page 44 in  \cite{Kap}.

\begin{lemma}[See Proposition 1.1 in  \cite{CLU} ]	
	Let $(g_n)\subset PSL(3,\C)$ be a sequence of projective transformations, then  there is a subsequence $(h_n)$ of $(g_n)$ and $h\in \SP(3,\C)$ such that:  
	\begin{enumerate}
		\item The sequence  $h_n$ converges to $h$ as elements in  $\SP(3,\C)$.
		\item  Considering $h_n$  and $h$ as holomorphic functions from $\P^2-Ker(h)$ to $\P^2$ we have $h_n$ converges to $h$  uniformly on compact sets of $\P^2-Ker(h)$.
		\end{enumerate}
\end{lemma}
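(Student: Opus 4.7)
The plan is to lift each $g_n$ to a matrix in $M(3,\C)$ of unit (Frobenius) norm, exploit compactness of the resulting unit sphere, and then descend matrix convergence both to convergence in $\SP(3,\C)$ and to uniform convergence of the associated holomorphic maps on compacts of $\P^2-Ker(h)$. Crucially, lifting to $\SL(3,\C)$ would not suffice, since that group is non-compact; one must normalize in a projectively invariant way, at the cost that the limiting matrix may be singular, which is precisely why the target space is $\SP(3,\C)$ rather than $\PSL(3,\C)$.

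Concretely, for each $n$ I pick ${\bf g}_n\in M(3,\C)-\{{\bf 0}\}$ lifting $g_n$ with $\|{\bf g}_n\|=1$. The unit sphere of $M(3,\C)\cong \C^9$ is compact, so some subsequence $\{{\bf h}_n\}$ converges to a matrix ${\bf h}$ with $\|{\bf h}\|=1$, and in particular ${\bf h}\neq {\bf 0}$. Setting $h=[{\bf h}]\in \SP(3,\C)$ and invoking continuity of the quotient map $M(3,\C)-\{{\bf 0}\}\to \SP(3,\C)\cong \P^8$ gives item (1). For item (2), fix a compact $K\subset \P^2-Ker(h)$ and lift it through a continuous local section of the Hopf projection $\pi:\C^3-\{{\bf 0}\}\to \P^2$ to a compact $\widetilde K\subset \C^3-\{{\bf 0}\}$. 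Since $K\cap Ker(h)=\emptyset$, one has ${\bf h}(w)\neq {\bf 0}$ for every $w\in \widetilde K$, so $\delta:=\min_{w\in \widetilde K}\|{\bf h}(w)\|>0$. The matrix convergence ${\bf h}_n\to {\bf h}$ is uniform on bounded sets of $\C^3$, hence for $n$ large one has $\|{\bf h}_n(w)\|\geq \delta/2$ throughout $\widetilde K$. Since $\pi$ is uniformly continuous on any spherical shell $\{v:\delta/2\leq \|v\|\leq M\}$, composing yields $h_n([w])\to h([w])$ uniformly for $w\in \widetilde K$, and hence uniformly on $K$.

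The main obstacle, and the actual content of the lemma, is exactly this transfer from matrix-level convergence to uniform convergence of the projectivized maps on compacts disjoint from $Ker(h)$. The crucial ingredient is the uniform lower bound $\|{\bf h}_n(w)\|\geq \delta/2$ on the lifts of $K$, which prevents the image from drifting towards the indeterminacy locus where $\pi$ fails to be defined; once this bound is secured, the remainder is a routine application of uniform continuity of $\pi$ on annular shells bounded away from the origin, and no further analytic input is needed.
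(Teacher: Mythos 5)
Your proposal is correct and is essentially the standard argument for this lemma, which the paper does not prove but quotes from Proposition 1.1 of \cite{CLU}: normalize lifts to the unit sphere of $M(3,\C)$, extract a convergent subsequence by compactness, and transfer matrix convergence to uniform convergence away from $Ker(h)$ via a uniform lower bound on $\|{\bf h}_n(w)\|$. One small fix: a single continuous section of $\C^3-\{{\bf 0}\}\to \P^2$ over an arbitrary compact $K$ need not exist, so take $\widetilde K=\pi^{-1}(K)\cap S^5$ (or finitely many local sections) instead; the rest of the argument goes through verbatim.
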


The following proposition  is   essentially due in \cite{Gus}. 

\begin{proposition}	 [See Lemmas  5.5  and 6.10 in \cite{Gus}]  \label{p:eq}  

	 \begin{enumerate}
	 	\item[] 
\item 	$
	\begin{bmatrix}
	1 & 1 &0\\
	0 &1 & 1\\
	0 & 0 & 1 
	\end{bmatrix}^n$ converges  to 	$
	\begin{bmatrix}
	0 & 0 &1\\
	0 &0  & 0\\
	0 & 0 & 0 
	\end{bmatrix}$. 

	\item  $
\begin{bmatrix}
\alpha & 1 &0\\
0 &\alpha  & 0\\
0 & 0 & \alpha^{-2}  
\end{bmatrix}^n$ converges  to 	$
\begin{bmatrix}
0 & 1 &0\\
0 &0  & 0\\
0 & 0 & 0 
\end{bmatrix}, \textrm{ if  } \vert \alpha\vert\geq 1 $.

\item  $
\begin{bmatrix}
\alpha & 1 &0\\
0 &\alpha  & 0\\
0 & 0 & \alpha^{-2}  
\end{bmatrix}^n$ converges  to 	$
\begin{bmatrix}
0 & 0 &0\\
0 &0  & 0\\
0 & 0 & 1 
\end{bmatrix}, \textrm{ if  } \vert \alpha\vert<1 $.

\item  $
\begin{bmatrix}
\alpha & 0 &0\\
0 &\beta  & 0\\
0 & 0 & \gamma 
\end{bmatrix}^n$ converges  to 	$
\begin{bmatrix}
0 & 0 &0\\
0 &0  & 0\\
0 & 0 & 1 
\end{bmatrix}, \textrm{ if  }  \alpha \beta \gamma=1,\, \vert  \alpha \vert < \vert \beta\vert<\vert \gamma\vert  $.

\item  $
\begin{bmatrix}
\alpha & 0 &0\\
0 & e^{2\pi i \theta }\alpha   & 0\\
0 & 0 & \alpha^{-2} e^{-2\pi i \theta } 
\end{bmatrix}^n$ converges  to 	$
\begin{bmatrix}
0 & 0 &0\\
0 &0  & 0\\
0 & 0 & 1 
\end{bmatrix}, \textrm{ if  }  \vert  \alpha \vert < 1$.
	
\end{enumerate}	
\end{proposition}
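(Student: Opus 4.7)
The plan is to verify each of the five limits by direct computation of the $n$-th power of the given matrix, then rescale by the entry of largest modulus (this rescaling is permissible since convergence in $\SP(3,\C)$ is convergence modulo $\C^*$).

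For item (1), I would use the Jordan-block formula to write
\[
\begin{bmatrix} 1 & 1 & 0 \\ 0 & 1 & 1 \\ 0 & 0 & 1 \end{bmatrix}^{n}
=\begin{bmatrix} 1 & n & \binom{n}{2} \\ 0 & 1 & n \\ 0 & 0 & 1 \end{bmatrix},
\]
and divide by the $(1,3)$-entry $\binom{n}{2}$. All other entries are $O(1/n)$ or $O(1/n^2)$, so the limit in $\SP(3,\C)$ is the claimed matrix. For items (2) and (3), a direct induction shows
\[
\begin{bmatrix} \alpha & 1 & 0 \\ 0 & \alpha & 0 \\ 0 & 0 & \alpha^{-2} \end{bmatrix}^{n}
=\begin{bmatrix} \alpha^{n} & n\alpha^{n-1} & 0 \\ 0 & \alpha^{n} & 0 \\ 0 & 0 & \alpha^{-2n} \end{bmatrix}.
\]
In case $|\alpha|\geq 1$ I would divide by $n\alpha^{n-1}$: the diagonal terms become $\alpha/n\to 0$, and the $(3,3)$ entry is $\alpha^{-3n+1}/n$, whose modulus is $|\alpha|^{-3n+1}/n\to 0$, yielding limit (2). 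In case $|\alpha|<1$, the entry of largest modulus is $\alpha^{-2n}$; dividing by it turns the other entries into $\alpha^{3n}$ and $n\alpha^{3n-1}$, which go to $0$, giving limit (3).

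For item (4) the $n$-th power is diagonal with entries $\alpha^{n},\beta^{n},\gamma^{n}$; since $|\alpha|<|\beta|<|\gamma|$, dividing by $\gamma^{n}$ sends the first two to $0$ (the condition $\alpha\beta\gamma=1$ is not needed here beyond ensuring the matrix lies in $\SL(3,\C)$). For item (5), the diagonal entries have moduli $|\alpha|^{n}, |\alpha|^{n}, |\alpha|^{-2n}$; the hypothesis $|\alpha|<1$ makes the third the dominant one, and after dividing by $(\alpha^{-2}e^{-2\pi i\theta})^{n}$ the first two entries have modulus $|\alpha|^{3n}\to 0$, giving the stated limit.

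Since every step is a direct calculation together with an elementary estimate on exponential versus polynomial growth, there is no genuine technical obstacle; the only care required is in choosing the correct scalar normalization in $\SP(3,\C)$ for each case, which amounts to identifying the entry with the largest modulus under the stated hypotheses on $|\alpha|$, $|\beta|$, $|\gamma|$.
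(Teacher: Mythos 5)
Your computations are correct: the closed forms for the $n$-th powers are right, and in each case you rescale by the entry of maximal modulus and use the elementary exponential-versus-polynomial estimates, which is legitimate because convergence in $\SP(3,\C)\cong\P^8$ is convergence of projective classes, so any nonzero scalar normalization may be used. The paper itself offers no argument for this proposition — it is quoted from Lemmas 5.5 and 6.10 of the cited work of Navarrete — and the proof you give (explicit powers of the Jordan and diagonal forms, normalized by the dominant entry) is precisely the standard computation behind that citation, so your route is essentially the same one, merely written out self-contained.
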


	A subgroup $G\subset PSL(3,\C)$ is  weakly semi-controllable,  if  it  have a global fixed point $p \in \P^2$; hence  for each line $\mathcal L$ in $\P^2 - \{p\}$ one has a canonical  holomorphic projection map $\pi$ from $\P^2 - \{p\}$ into $\mathcal L \cong \P^1$. This defines a group morphism:
	\[
	\begin{matrix}
	\Pi = \Pi_{p,\ell,G} : G \rightarrow Bihol(\ell)\cong \PSL(2,\C)\\
	\Pi(g)(x) = \pi(g(x))
	\end{matrix}
	\]
	which essentially is independent of all choices, see page 133 in \cite{CNS} for details.\\ 
		\section{Examples} \label{s:examples}
	
	In this section we present examples of curves invariant under Lie groups later we will see that any irreducible curve is one of the curves presented here.
	
	\begin{example}\label{e:line}[Lines]
		The   group of affine   projective transformations. 
	\end{example}
	
	\begin{example}\label{e:ver}[Veronese Curves]
		Recall the Veronese embedding is given by 
		\[
		\begin{array}{l}
		\psi:\Bbb{P}^1_\Bbb{C}\rightarrow \Bbb{P}^2_\Bbb{C}\\
		\psi([z,w])=[z^2,2zw, w^2].
		\end{array}
		\]
		and the unique irreducible representation of $PSL(2,\Bbb{C})$ into $PSL(3,\Bbb{C})$ is given  $\iota: \PSL(2,\Bbb{C})\rightarrow \PSL(3,\Bbb{C})$ where
		\[
		\iota\left(\frac{az+b}{cz+d}\right )=\left [
		\begin{array}{lll}
		a^2&ab&b^2\\
		2ac&ad+bc&2bd\\
		c^2&dc&d^2\\
		\end{array}
		\right ].
		\]
		Then $\iota PSL(2,\Bbb{C})$ leaves invariant  $Ver=\psi(\P^1)$. 
	\end{example}
	\begin{example}\label{e:cubic}[Cubic with a cusp]
		Consider the homogeneous cubic polynomial $p(x,y,z)=xy^2-z^3$ then the projective curve induced by $p$ is a cubic with a cusp in [1:0:0] and a inflection point in  [0:1:0]. Moreover, the cubic and the lines $\overleftrightarrow{e_1,e_2}$, $\overleftrightarrow{e_3,e_2}$ and $\overleftrightarrow{e_1,e_3}$  are  invariant under  the Lie group 
		\[
		\C_p^*=
		\left \{
		\begin{pmatrix}
		a^{-5}&0 &0\\
		0 &a^{4}& 0\\
		0 &0& a\\
		\end{pmatrix}: a\in \C^*
		\right \}
		\]
	\end{example}

	\begin{example} [Pencil of lines] Consider the Lie group given by
\[
\C^2_\infty= 
\left \{
\begin{pmatrix}
1 & a &b \\
0 & 1 & 0\\
0 &0& 1\\
\end{pmatrix}:a,b\in \C
\right \}.
\]
Then $\C^2_\infty$ leaves invariant any   line passing trough $[1:0:0]$.
	\end{example}
	\section{Geometry and dynamic  of  the invariant curves} \label{GDC}
	\begin{lemma} \label{l:fp}
		Let $G\subset PSL(3,\C)$ be a discrete  infinite  group and $S$ an algebraic reducible  complex curve  invariant under $G$. If  $S$ is not a line and  $g\in G$ has infinite order then $S$ contains a fixed point of $g$ and the action of $g$ restricted to $S$    has infinite order. Moreover, if $S$ is non-singular, then $S$ has genus 0.
	\end{lemma}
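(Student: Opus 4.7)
The plan is to exploit the pseudo-projective limits of the iterates $g^n$ given by Proposition \ref{p:eq}. Since $g$ has infinite order in the discrete group $G$, Lemma \ref{l:inford} (together with discreteness) ensures that $g$ is parabolic or loxodromic. After conjugation, $g$ falls into one of the five normal forms of Proposition \ref{p:eq}, and direct inspection of those limits shows that the full sequence $(g^n)$ converges in $\SP(3,\C)$ to a rank-one pseudo-projective map $h$. Writing $L := Ker(h) \subset \P^2$, this is a projective line, while $h$ sends $\P^2 \setminus L$ to a single point $q \in \P^2$; moreover $g^n \to h$ uniformly on compacta of $\P^2 \setminus L$.

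First, I would deduce that $q \in S$ and that $g(q) = q$. Because $S$ is irreducible and is not a line, it cannot be contained in $L$, so $S \setminus L$ is nonempty and open in $S$. Picking any $p \in S \setminus L$, the orbit points $g^n(p)$ belong to $S$ and converge to $q$; closedness of $S$ then yields $q \in S$. Continuity of $g$ applied to the shifted sequence gives $g^{n+1}(p) = g(g^n(p)) \to g(q)$, while also $g^{n+1}(p) \to q$, so $g(q) = q$. This settles the first assertion.

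Second, to verify that $g|_S$ has infinite order, I would argue by contradiction. If $g^k|_S = \mathrm{id}$ for some $k \geq 1$, then $S$ would lie inside the fixed-point set of $g^k$ in $\P^2$. Since $g$ has infinite order, $g^k \ne \mathrm{id}$, and a Jordan-form analysis of $g^k$ shows that its fixed locus consists of at most a projective line together with an isolated point, or only finitely many isolated points. An irreducible curve trapped in such a set must be a line, contradicting the hypothesis on $S$.

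The genus statement is the main difficulty. Assuming $S$ is smooth of degree $d \geq 2$, the degree-genus formula gives genus $(d-1)(d-2)/2$. For $d \geq 4$ the genus is at least three, and Hurwitz's theorem forces $\mathrm{Aut}(S)$ to be finite, contradicting the infinite order of $g|_S$. The delicate case is $d = 3$, where $S$ is a smooth plane cubic, hence an elliptic curve whose full biholomorphism group contains the continuous family of translations and is therefore infinite. Here I would invoke the classical fact that the subgroup of $\mathrm{Aut}(S)$ coming from $PSL(3,\C)$ is nonetheless finite: any such projective automorphism permutes the nine inflection points of the cubic, and the kernel of this permutation action is trivial since four inflection points can be chosen in general position (a projective frame). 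This rules out $d = 3$, leaving only $d = 2$, a smooth conic of genus zero. The subtle step, and the main obstacle, is exactly this separation between the \emph{intrinsic} automorphism group of an elliptic curve and its subgroup arising from linear projective transformations of $\P^2$.
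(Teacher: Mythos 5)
Your proposal is correct, and while your first two assertions run essentially parallel to the paper (the attracting point of the pseudo-projective limit of the iterates lies in $S$ because $S\not\subset Ker(h)$ and $S$ is closed and $g$-invariant; your fixed-locus argument for the infinite order of $g|_S$ is in fact cleaner than the paper's terse infinite-orbit remark), the genus-zero part takes a genuinely different route. The paper argues topologically: a nontrivial loop in $S$, moved off the kernel line and pushed forward by $g^N$, lands in a simply connected neighbourhood of the attracting point, so $g^N_\#$ kills its class, contradicting that $g^N$ is a homeomorphism of $S$; hence $\pi_1(S)=1$ and the genus is $0$. That argument is uniform in the degree and is what also gives the paper's corollary on simple connectivity of invariant submanifolds. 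You instead use the degree--genus formula and Hurwitz to exclude $d\ge 4$, and for $d=3$ the classical finiteness of the group of \emph{linear} automorphisms of a smooth cubic via its nine inflection points; this is correct and correctly isolates the delicate point (intrinsic versus projective automorphisms of an elliptic curve), at the price of invoking these classical inputs. Two remarks. First, your permutation-kernel step can be made self-contained with the fixed-locus description you already used: the fixed set of a nontrivial element of $\PSL(3,\C)$ is at most a line together with a point, and a line meets a smooth cubic in at most three points, so it cannot contain all nine inflection points; this avoids having to exhibit four inflection points in general position (true, but it requires a cap argument in the Hesse configuration). Second, a small inaccuracy that the paper's own proof shares: not every parabolic or loxodromic element is conjugate to one of the five normal forms of Proposition \ref{p:eq}, and the full forward sequence $(g^n)$ need not converge to a rank-one limit --- for a lift $\mathrm{diag}(\lambda,\lambda,\lambda^{-2})$ with $|\lambda|>1$, or with two dominant eigenvalues of equal modulus and distinct arguments, the forward limits have rank two and point kernel. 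This is harmless, since one may replace $g$ by $g^{-1}$ (same fixed points, same order of the restriction to $S$), but it should be said explicitly.
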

	
	\begin{proof}
By Lemma \ref{l:inford} there is an element $g\in G$ with infinite order which is either loxodromic or parabolic.  Consider the following  cases:\\
		
		Case 1.- $g$ is loxodromic. By Proposition \ref{p:eq}   there is a   point $p$ an a line $\ell$ such that: $p\notin \ell$,   $p\cup \ell$ is  invariant under $g$ and $(g^n)_{n\in \Bbb{N}}$ converges uniformly on compact sets of $\P^2-\ell$ to $p$. \\
		
		Claim 1.- $p\in S$. Since $S$ is not a line by Bézout Theorem we know that $S\cap \ell$ is a finite set, thus we can find a point $q\in S-\ell$.  Since $S$ is closed and  invariant under $g$, we conclude $p\in S$. \\
		
		Case 2.- $g$ is parabolic.  By Proposition \ref{p:eq}  there is a   point $p$ an a line $\ell$ such that: $p\in \ell$,   $p$, and $ \ell$ are   invariant under $g$ and $(g^n)_{n\in \Bbb{N}}$ converges uniformly on compact sets of $\P^2-\ell$ to $p$. As in the previous  case we can show that $p\in S$.\\

		Finally observe that for any point $q$ in $S-\ell$ the set $\{g^n q\}$ is infinite, which shows that the action of $g$ restricted to $S$ has infinite order.\\
		
	Now let us assume that $S$ is non-singular. If  $\pi_1(S)$ is non-trivial, then there exists a non-trivial class in
		$\pi_1(S)$, say $h$. Let us assume without lost of generality that $G$ contains a loxodromic element $g$,  the proof in the parabolic case will be similar.  Then there  is a   point $p$ an a line $\ell$ such that: $p\notin \ell$,   $p\cup \ell$ is  invariant under $g$ and $(g^n)_{n\in \Bbb{N}}$ converges uniformly on compact sets of $\P^2-\ell$ to $p$.  Recall that $h$ can be assumed as the homotopy  class of a path $\tilde h$ based in $p$, since  $S\cap \ell$ is finite we can assume that  $\tilde h$  is a loop based on $p$ that does not contains points in $\ell$. For $N$ large we have that $g^N(\tilde h)$ is contained in a simply connected neighborhood of $p$ that is $g^N_\#  h$ is trivial, thus $g^{N}_{\#}: \pi_1(S)\rightarrow \pi_1(S) $  is not a group isomorphism, which is a contradiction, since $g^N$ is a homeomorphism. 	
	\end{proof}
	As an immediate consequence of the proof of the previous lemma we have:
	
	\begin{corollary}
If $M\subset \P^2$ an embedded k-manifold with $k\geq 2$ invariant under a discrete group $G\subset PSL(3,\C)$, then $S$ is simply connected
	\end{corollary}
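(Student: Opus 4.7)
The plan is to mimic, almost verbatim, the contraction argument appearing at the end of the proof of Lemma \ref{l:fp}; the only real novelty is how one pushes a loop in $M$ off the invariant line $\ell$, and this is precisely where the hypothesis $k\geq 2$ is used.

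By Lemma \ref{l:inford} pick $g\in G$ of infinite order, either loxodromic or parabolic. Applying the corresponding case of Proposition \ref{p:eq} yields a point $p\in \P^2$ and a $g$-invariant complex line $\ell$ such that a subsequence of $(g^n)$ converges to $p$ uniformly on compact subsets of $\P^2\setminus \ell$. A preliminary step, identical to its counterpart in Lemma \ref{l:fp}, shows $p\in M$: if $M\subseteq \ell$, then since $\dim_\R \ell=2$ and $k\geq 2$, $M$ is an open $2$-submanifold of $\ell\cong \P^1$, and in fact $M=\ell$ by closedness, in which case $M$ is simply connected and there is nothing to prove; otherwise choose $q\in M\setminus \ell$ and use that $M$ is closed (as a $G$-invariant embedded submanifold) together with $g^{n}q\to p$ to conclude that $p\in M$.

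Now fix a loop $\tilde h:S^1\to M$. Since $\ell$ has real codimension $2$ in $\P^2$ and $M$ has real dimension at least $2$, a transversality argument shows that $M\cap \ell$ has real codimension at least $2$ in $M$; therefore the inclusion $M\setminus \ell\hookrightarrow M$ is $\pi_1$-surjective and we may assume $\tilde h(S^1)\subset M\setminus \ell$. Because $\tilde h(S^1)$ is a compact subset of $\P^2\setminus \ell$, the iterates $g^n\circ \tilde h$ converge uniformly to the constant map at $p$, so for $N$ large $g^N\circ \tilde h$ lies in an arbitrarily small simply connected coordinate neighborhood of $p$ in $M$. Hence $g^N_\#[\tilde h]$ is trivial in $\pi_1(M,p)$, but $g^N$ is a self-homeomorphism of $M$ and so induces an isomorphism on $\pi_1$; consequently $[\tilde h]$ was trivial to begin with.

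The one delicate point I expect to be the main obstacle is justifying the codimension claim when $k=2$, where one must rule out tangential contact of $M$ with $\ell$ along a real $1$-dimensional arc (which could in principle obstruct the homotopy of $\tilde h$). This is handled by using that $\ell$ is a real-analytic submanifold of $\P^2$ and perturbing $\tilde h$ inside $M$ to achieve transversality with the smooth part of $M\cap \ell$; the degenerate case in which $M$ shares an open piece with $\ell$ forces, by connectedness of $\ell$ and the manifold structure of $M$, the equality $M=\ell$, which has already been disposed of. Once this point is settled, the remainder of the argument is a direct transcription of the loop-contraction trick from Lemma \ref{l:fp}.
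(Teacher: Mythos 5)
Your strategy is the same one the paper intends (the paper gives no separate argument: it simply declares the corollary an immediate consequence of the loop--contraction step at the end of Lemma \ref{l:fp}), and you have correctly isolated the one step that does not transfer: getting the loop off the line $\ell$. But your resolution of that step is not valid. The assertion that ``a transversality argument shows that $M\cap\ell$ has real codimension at least $2$ in $M$'' has no justification: $M$ is an arbitrary invariant embedded manifold, nothing forces it to meet $\ell$ transversally, and when $k=2$ the intersection $M\cap\ell$ can perfectly well be a real curve in $M$. Your fallback --- perturbing $\tilde h$ inside $M$ until it is transverse to the smooth part of $M\cap\ell$ --- only arranges that the loop meets $M\cap\ell$ in finitely many points; it does not make $\tilde h(S^1)$ disjoint from $\ell$, and disjointness is exactly what is needed to have a compact subset of $\P^2\setminus\ell$ on which $g^{n}$ converges uniformly to the constant $p$.

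Moreover this is a genuine obstruction, not a technicality. Take $M=\{[x:y:z]\in\P^2 : x,y,z\in\R\}$, the real projective plane, and $G$ the infinite discrete cyclic group generated by $g=[\mathrm{diag}(\lambda,1,\lambda^{-1})]$ with $\lambda>1$ real. Then $p=[e_1]$, $\ell=\overleftrightarrow{e_2,e_3}$, and $M\cap\ell$ is a real projective line inside $M$; every homotopically nontrivial loop in $M$ has mod $2$ intersection number $1$ with that line, so no essential loop can be homotoped off $\ell$ --- and indeed $\pi_1(M)\cong\Z/2\neq 1$. So the contraction argument really requires $M\cap\ell$ to have real codimension at least two in $M$, which is what B\'ezout provides in Lemma \ref{l:fp} (there $M$ is a complex algebraic curve and $M\cap\ell$ is finite); under the literal hypotheses of the corollary (a smooth embedded real $k$-manifold, $k\geq 2$, with no complex-analyticity or ``thin intersection'' assumption) the step you flagged cannot be repaired, and the statement itself fails in this generality. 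Two smaller points: closedness of $M$ is not a consequence of $G$-invariance of an embedded submanifold (you need $M$ compact, or closed in $\P^2$, to conclude $p\in M$), and $G$ must be assumed infinite in order to invoke Lemma \ref{l:inford}; both hypotheses should be stated rather than derived.
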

	\begin{lemma} \label{l:genus}
		Let $G\subset PSL(3,\Bbb{C})$ be a discrete group and $S$ a $G$-invariant  complex  irreducible algebraic curve. Then the  dual group $G^*$ leaves invariant the dual algebraic curve $S^*$.  Moreover, if $S$ has singularities then $S^*$ does. 
	\end{lemma}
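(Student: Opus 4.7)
The plan is to prove the two assertions in turn.

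For the invariance of $S^*$, I would first observe that any $g\in PSL(3,\C)$ carries complex lines to complex lines, so it induces a projective transformation $g^*$ on $\P^{*2}\cong Gr(\P^2)$; if $\mathbf{g}\in SL(3,\C)$ is a lift of $g$, then $g^*$ is represented by $(\mathbf{g}^{-1})^T$. The assignment $g\mapsto g^*$ is a Lie-group automorphism of $PSL(3,\C)$, so $G^*:=\{g^*:g\in G\}$ is again an infinite discrete subgroup. Recall that $S^*$ is by definition the closure in $\P^{*2}$ of the image of the Gauss map $\gamma\colon S^{\mathrm{sm}}\to\P^{*2}$, $p\mapsto T_pS$. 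Since every $g\in G$ is a biholomorphism of $\P^2$ fixing $S$ setwise, it preserves $S^{\mathrm{sm}}$, and the chain rule gives $T_{g(p)}S=g(T_pS)$; equivalently $\gamma\circ g=g^*\circ\gamma$ on $S^{\mathrm{sm}}$. Passing to closures yields $g^*(S^*)=S^*$.

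For the second claim I would argue by contradiction, assuming $S$ is singular but $S^*$ is smooth. Since $S$ is irreducible, so is $S^*$ (as the closure of the image of an irreducible variety under a morphism); by the first part, $S^*$ is an irreducible plane algebraic curve invariant under the infinite discrete group $G^*$. Applying Lemma \ref{l:fp} to $S^*$ then forces $S^*$ to have genus $0$. As a smooth irreducible plane curve of degree $d^*:=\deg S^*$ has genus $(d^*-1)(d^*-2)/2$, we conclude $d^*\in\{1,2\}$. Combining biduality $S^{**}=S$ with the Pl\"ucker degree formula applied to the smooth curve $S^*$ gives $\deg S = d^*(d^*-1)$. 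If $d^*=1$ then $\deg S=0$, impossible for a curve; if $d^*=2$ then $\deg S=2$, so $S$ is an irreducible conic and therefore necessarily smooth, contradicting the hypothesis.

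The main thing to be careful about is verifying that $G^*$ really is infinite and discrete in order to legitimately apply Lemma \ref{l:fp} to $S^*$; this reduces to the observation that $g\mapsto g^*$ is a homeomorphism of $PSL(3,\C)$ onto itself. Beyond that the argument is a clean two-step contradiction combining Lemma \ref{l:fp} with biduality and the Pl\"ucker degree formula.
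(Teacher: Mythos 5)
Your proof is correct and follows essentially the same route as the paper: tangent lines at smooth points are carried to tangent lines by biholomorphisms (equivariance of the Gauss map), and closure gives $G^*$-invariance of $S^*$; the second claim is the same contradiction via genus $0$, the genus--degree formula, biduality $S^{**}=S$, and the Pl\"ucker class formula forcing $S$ to be a smooth conic. You are in fact slightly more explicit than the paper in justifying that $G^*$ is infinite discrete and that the genus of $S^*$ vanishes by Lemma \ref{l:fp}, points the paper leaves implicit.
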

	\begin{proof}
		Let $p\in S^*$ then  there is a sequence of points $(p_n)\in S^*$ such that each $p_n$ converges to $p$ and for each $n$ we have  $p_n^*$ is a tangent line to $S$ at a non-singular point. Since the action of $G$ is by biholomorphism of $\P^2$ we deduce for each  $\gamma\in\Gamma$   we have $ \gamma( p_n^*)$ is a tangent line to $S$ at a non-singular point. Trivially $\gamma( p_n^*)$ converges to $\gamma( p^*)$.
		
		Let us prove the other part of the Lemma, let us assume that $S^*$ is non-singular, then by the Riemann-Hurwitz formula we have 
		\[
		0=(n-1)(n-2)
		\]
		where $n$ is the degree of $S^*$. Thus $S^*$ is a line  or $S$ is quadratic, since $S^{**}=S$, see page 74 in \cite{Fischer}, we deduce that $S^*$ is  quadratic. By the Plücker class formula, we have  the degree of $S$ is given by $n(n-1)=2$. Since every quadratic in $\P^2$ is projectively equivalent to the Veronese curve and the Veronese curve is non-singular we deduce $S$ is non-singular, which is a contradiction.
	\end{proof}

	\begin{lemma} \label{l:cs}
		Let $G\subset PSL(3,\Bbb{C})$ be a discrete group and $S$ a $G$-invariant  complex  irreducible algebraic curve. If $S$ has singularities then $S$ is  a cubic with one node and one inflection point.
	\end{lemma}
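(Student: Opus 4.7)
The plan is to lift the dynamics to the normalization $\nu\colon\tilde S\to S$ via Lemma~\ref{l:fp}, establish $\tilde S\cong\P^1$, and then combine a preimage-counting argument on $\P^1$ with the Plücker formulas to pin down $(d,\delta,\kappa)$.

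Since $S$ is singular it is not a line, so Lemma~\ref{l:fp} supplies $g\in G$ of infinite order with a fixed point $p\in S$ and with $g|_S$ of infinite order. By the universal property of the normalization, the $G$-action lifts to biholomorphisms of $\tilde S$, with $g$ still of infinite order. Hurwitz's automorphism theorem rules out genus $\geq 2$; for genus $1$ note that $g$ permutes the finite fiber $\nu^{-1}(p)$, so some power $g^N$ fixes a point of $\tilde S$, which is impossible on an elliptic curve since every non-trivial automorphism with a fixed point has order $2$, $3$, $4$ or $6$. Hence $\tilde S\cong\P^1$ and $S$ is rational.

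The action of $G$ on $\tilde S$ then induces a homomorphism $\phi\colon G\to PSL(2,\C)$ with $\phi(g)$ of infinite order, and $\phi(G)$ preserves the finite set
\[
Q:=\nu^{-1}(S_{\mathrm{sing}})\cup\nu^{-1}(\mathrm{Inf}(S))\subset\tilde S,
\]
since both the singular locus and the inflection locus are intrinsic to the projective embedding. Any Möbius transformation fixing three distinct points of $\P^1$ is the identity, so $|Q|\geq 3$ would force $\phi(G)\hookrightarrow\mathrm{Sym}(Q)$ to be finite, contradicting $\phi(g)$ having infinite order; hence $|Q|\leq 2$. Combining this with the rational genus formula $\delta+\kappa=(d-1)(d-2)/2$ and the Plücker formula $f=3d(d-2)-6\delta-8\kappa$ gives $|Q|=2\delta+\kappa+f=3d(d-2)-4\delta-7\kappa$. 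Smooth conics rule out $d=2$; a case analysis, using also $d^{\ast}=2(d-1)-\kappa\geq 1$ to bound the number of cusps, shows $|Q|\geq 3$ for every $d\geq 4$. The only remaining option is $d=3$ with $(\delta,\kappa)=(0,1)$, i.e.\ a cuspidal cubic, and the same Plücker formula then returns $f=1$, matching the statement.

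I expect the main obstacle to lie in handling singularities more complicated than ordinary nodes and cusps: the elementary Plücker formulas used above must be replaced by their generalized versions or by a direct Riemann–Hurwitz count on the morphism $\nu$, and one must verify in every such case that $|Q|\geq 3$, so that the reduction is genuinely forced into the nodal/cuspidal regime.
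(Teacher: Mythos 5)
Your strategy runs parallel to the paper's: pass to the normalization, use the infinite-order element and its fixed point from Lemma \ref{l:fp} to force $\widetilde S\cong\P^1$, use the fact that an infinite-order M\"obius transformation fixes at most two points to bound a $G$-invariant finite subset of $\widetilde S$, and finish with genus and Pl\"ucker formulas. Your endgame is genuinely different in one respect: you place the inflection points inside the invariant set $Q$ and run a degree-by-degree count, whereas the paper bounds only the preimages of the singular points, gets degree $3$ from the Clebsch formula, and then invokes the dual curve (Lemma \ref{l:genus}) together with the Pl\"ucker class formula to force a single cusp. Within the world of curves having only ordinary nodes and simple cusps, both routes work.

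However, the issue you flag at the end is not a removable technicality; it is a genuine gap, and it is exactly where both your argument and the paper's break. The constraint $|Q|\leq 2$ only bounds the number of branches over the singular locus plus the number of smooth flexes; it does not force the singularities to be ordinary nodes or cusps, and the formulas $\delta+\kappa=(d-1)(d-2)/2$, $f=3d(d-2)-6\delta-8\kappa$, $d^{*}=2(d-1)-\kappa$ are valid only under that assumption. Concretely, consider the irreducible quartic $x^4=y^3z$, the image of $t\mapsto[t^3:t^4:1]$: it has geometric genus $0$, a single unibranch singular point at $[0:0:1]$ (locally $y^3=x^4$, which is not a simple cusp), and exactly one flex at a smooth point (the undulation $[0:1:0]$, where the tangent meets the curve with multiplicity $4$), so $|Q|=2$; moreover it is invariant under the infinite discrete cyclic group generated by $\mathrm{diag}(a^3,a^4,1)$ with $|a|\neq 1$. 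Thus your claim that a case analysis yields $|Q|\geq 3$ for all $d\geq 4$ cannot be extended beyond the nodal/cuspidal regime -- this example meets every constraint you derive while violating the conclusion of the lemma itself. To be fair, the paper's own proof has the identical unjustified step (Claim 2 asserts that a one-point fiber of the normalization over a singular point must be a simple cusp and a two-point fiber a simple node), so your instinct about where the real difficulty lies was exactly right; but as written neither your argument nor the paper's closes it, and the example above indicates that it cannot be closed without strengthening the hypotheses or enlarging the list of exceptional curves.
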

	\begin{proof}  It's well known that $S$ has a finite number of singularities. Since $\Gamma$ acts on $S$ by biholomorphisms of $\P^2$ we conclude that $\Gamma$ takes singularities of $S$ into singularities of $S$, thus  $\Gamma_0=\bigcap_{p\in Sing (S)}Isot(\Gamma_0,p)$, here $Sing(S)$ denotes the singular set of $S$, is a finite index subgroup  of $\Gamma$.\\
		
		Claim 1.-  The genus of $S$ is $0$.  Let $\widetilde{S}$ be the desingularization of $S$, then there is a birrational  equivalence $f:\widetilde{S}\rightarrow  {S}$. Now, let  $\gamma_0 \in \Gamma$ be an element with infinite order, then there is $m\in \Bbb{N}$ such that $\gamma_1=\gamma_0^n\in \Gamma_1$. since $f$ is a birrational equivalence we can construct $\widetilde {\gamma}_{1}:\widetilde{S}-f^{-1}Sing (S) \rightarrow \widetilde{S}-f^{-1}Sing (S)$ a biholomorphism such that the following diagram commutes:
		
		\begin{equation}\label{e:diag}
			\xymatrix{\widetilde{S}-f^{-1}Sing (S)\ar[r]^{\widetilde {\gamma}_{1}}\ar[d]^{f}& \widetilde{S}-f^{-1}Sing (S)\ar[d]^{f}\\
				S-Sing(S)\ar[r]^{\gamma_1} & S-Sing(S)}
		\end{equation}
		
		Since each arrow in the previous diagram is a biholomorphism we deduce that $\widetilde {\gamma}_{1}$ admits a holomorphic extension to $\widetilde{S}$. Observe that  by Lemma \ref{l:fp}, $\gamma_1$ has a fixed point in $S$ and its action on $S$ has infinite order, then by diagram \ref{e:diag}  we deduce that $\widetilde {\gamma}_{1}$ has infinite order and at least one fixed point. Recall that a Riemann surface whose group of biholomorphisms is infinite should have genus 1 or 0, see Theorem in 3.9 in  \cite{miranda}, and in the case of Riemann surfaces of genus 1 the subgroup of biholomorphism  sharing a fixed point should be finite, see Proposition 1.12 in \cite{miranda}, which concludes the proof of the claim.\\

		Claim 2.-  The curve  $S$ has at most two singularities. Moreover, the singular set is either a node or at most  simple cusp. Let $f$, $\widetilde{S}$, $\gamma_1$, $\widetilde \gamma_{1}$ as in claim 1.  Thus $\widetilde \gamma_{1}$ fixes each point in $f^{-1}(Sing(S))$. Since $\widetilde S$ has genus 0,  we deduce  $\widetilde \gamma_{1}$  can be   conjugated to a Möbius transformation. Because $\widetilde \gamma_{1}$  has infinite order, we conclude $f^{-1}(Sing(S_j))$ contains at most two points.  If $p$ is a singular point in $S$ we have $f^{-1}{p}$ is either one point or two points. If it is one point, $p$ should be a cusp and in the remaining case   $p$  is a simple node. Now  it is clear that either $S$ has one simple node or at most two cusp.\\


		Claim 3.- $S$ has degree 3 and the singular set is either a simple node or a single cusp. Given that  $Sing(S)$ contains  only cusp and simple nodes and the  genus is  0, by applying Clebsch's genus formula, see page 179 in \cite{Fischer},  to $S$ and we get:
		\[
		0=genus(S)=(n-1)(n-2)-2(d+s)
		\]
		where $n$ is the degree,  $d$ is the number of nodes and  $s$ is the number of cusp  in $S$. In our case the previous equation implies  following possibilities:
		\[
		\left\{
		\begin{array}{ll}
		d=1     & s=0 \\
		d=0     & s=1\\
		d=0 & s=2
		\end{array}
		\right.
		\]
		Substituting this  values in the  Clebsch's genus formula we get $n=3$ and  also we conclude  the case  $d=0, s=2$ is not possible.\\
		
		On the hand, by Lemma \ref{l:genus} the curve  $S^*$ is singular and has degree three, thus by  Plücker class formula, see page 89 in \cite{Fischer}, we obtain:
		\[
		3=deg(S^*)=deg(S)(deg(S)-1)-2d-3s=6-2d-3s
		\]
		which is only possible when $d=0$ and $s=1$, that is  the singular set consist of a single cusp. To conclude the proof we need to use Plücker inflection point formula, recall  this formula is given by,  see page 89 in \cite{Fischer}:  $s^*=3deg(S)(deg(S)-2)-6d-8s$, where $s^*$ is the number on inflection points in $S$.
	\end{proof}
	\begin{lemma} \label{l:cc}
		Let $G\subset PSL(3,\Bbb{C})$ be a discrete group and $S$ be an irreducible singular curve invariant under $G$. Then  there is a a projective transformation $\gamma\in PSL(3,\Bbb{C})$ such that $gS$ is the curve induce by the polynomial $p(x,y,z)=xy^2-z^3$ and $\gamma G\gamma^{-1}\subset \Bbb{C}^*_p$, see example \ref{e:cubic}.  
	\end{lemma}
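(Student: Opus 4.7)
The plan is to find coordinates in which $S$ is the standard cuspidal cubic $xy^2=z^3$ and then to use the intrinsic geometry of that curve to force $G$ into the diagonal torus $\C^*_p$ of Example \ref{e:cubic}.

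First I would extract the distinguished points of $S$. By Lemma \ref{l:cs}, $S$ is a cubic with exactly one cusp $c$ and no nodes; the Plücker inflection point formula recalled at the end of that proof yields
\[
s^* = 3\deg(S)(\deg(S)-2) - 6d - 8s = 9 - 8 = 1,
\]
so $S$ also has a single inflection point $f$. Every $\gamma\in G$ acts as a biholomorphism of $\P^2$ and so permutes cusps with cusps and inflections with inflections: by uniqueness, $\gamma(c)=c$ and $\gamma(f)=f$. The tangent $T_c$ at the cusp and the tangent $T_f$ at the inflection are intrinsic to those points, so they are $G$-invariant too. By Bézout, each of $T_c$ and $T_f$ meets $S$ only at $c$ or at $f$ respectively (with intersection multiplicity $3$, by the local cusp model and the definition of inflection). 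Hence $T_c\neq T_f$, and they intersect in a single point $q\notin\{c,f\}$, which is again fixed by $G$. Thus $c,f,q$ are three $G$-fixed points in general position.

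Second I would choose homogeneous coordinates so that $c=[1:0:0]$, $f=[0:1:0]$, $q=[0:0:1]$, whence $T_c=\{y=0\}$ and $T_f=\{x=0\}$. Writing the defining cubic as $P=\sum a_{ijk}x^iy^jz^k$, the triple contact of $T_c$ with $S$ at $c$ forces $P(x,0,z)=a_{003}z^3$, killing $a_{300},a_{201},a_{102}$; the analogous contact along $T_f$ kills $a_{030},a_{021},a_{012}$. In the chart $x=1$ around the cusp, the vanishing of the gradient forces $a_{210}=0$, and the requirement that the quadratic part $a_{120}y^2+a_{111}yz$ be a perfect square (so the local model is a cusp $y^2=z^3$ and not a node) forces $a_{111}=0$. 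After a harmless rescaling of $z$ the polynomial becomes exactly $xy^2-z^3$. Let $\gamma\in\PSL(3,\C)$ be the projective transformation implementing this change of coordinates.

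Finally, after replacing $G$ by $\gamma G\gamma^{-1}$, every element of $G$ fixes $[1:0:0]$, $[0:1:0]$ and $[0:0:1]$, and therefore admits a diagonal lift $\mathrm{diag}(\lambda_1,\lambda_2,\lambda_3)$ with $\lambda_1\lambda_2\lambda_3=1$. Such a matrix sends $xy^2-z^3$ to $\lambda_1\lambda_2^2\,xy^2-\lambda_3^3\,z^3$, so preservation of $S$ is the single equation $\lambda_1\lambda_2^2=\lambda_3^3$. Setting $\lambda_3=a$ and solving yields $\lambda_2=a^4$ and $\lambda_1=a^{-5}$, which is precisely the one-parameter group $\C^*_p$. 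The most delicate step is the second paragraph: extracting $xy^2-z^3$ on the nose rather than merely some projectively equivalent cuspidal cubic. Once the two triple-contact conditions along the distinguished tangents and the perfect-square local cusp model are imposed, the remaining freedom is absorbed by the single rescaling of $z$ and the canonical equation drops out.
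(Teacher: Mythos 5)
Your proof is correct and follows essentially the same strategy as the paper: normalize $S$ to the standard cuspidal cubic, observe that the conjugated group must fix the cusp, the flex and the intersection point of the two distinguished tangent lines, hence acts by diagonal lifts, and then extract the relations $\lambda_2=a^4$, $\lambda_1=a^{-5}$ from invariance of $xy^2-z^3$ together with the determinant condition. The only cosmetic differences are that you re-derive the projective normal form of the cuspidal cubic from the triple-contact conditions (where the paper simply cites the classification in Shafarevich) and you impose invariance on the defining polynomial rather than, as the paper does, on the single point $[1:1:1]\in S$, which yields the same equation $\lambda_1\lambda_2^2=\lambda_3^3$.
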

	\begin{proof}
		By Lemma \ref{l:cs} we have $S$ is a cubic with a cusp, then there is a projective transformation $\gamma$ such that $\gamma S$ is the Curve induced by the polynomial $p(x,y,z)=xy^2-z^3$, see cite \cite{sha}. Since $G$ acts by biholomorphism of $\Bbb{P}^2_\Bbb{C}$ the group  $gGg^{-1}$ leaves invariant the singular point and the inflection point of $gS$. On the other hand, 
		a  straightforward computation shows  $\overleftrightarrow{e_2,e_3}$ is the tangent line to $gS$ at $[e_2]$ and $\overleftrightarrow{e_1,e_3}$ is the unique  tangent line to $gS$ at $[e_1]$, once again since the action  of $G$ on $\Bbb{P}^2_\Bbb{C}$  is biholomorphisms we conclude that $\overleftrightarrow{e_2,e_3}$  and $\overleftrightarrow{e_1,e_3}$  are G-invariant, thus $[e_3]$  is fixed by $gGg^{-1}$. Therefore each element in $gGg^{-1}$ has a diagonal lift. Let $g\in gGg^{-1}$ and $(g_{ij})\in SL(3,\Bbb{C})$ be a lift of $g$, considering  $[1:1:1]\in gS$  we conclude:
		\[
		g_{11}g_{22}^2-g_{33}^3=0
		\]
		Using the fact $g_{11}g_{22}g_{33}=1$, we deduce $g_{22}=g_{33}^4$ and $g_{11}=g_{33}^{-5}$, which concludes the proof.
	\end{proof}
	
\section{Proof of the main Theorems} \label{s:main}
Since $S$ is an algebraic surface, we know $S$ is a finite union of irreducible curves, say $S=\bigcup_{j=1}^n S_j$, then $G_0=\bigcap Isot(G, s_j)$ is a finite index subgroup of $G$, leaving invariant each $S_j$. If $S_j$ has singularities  then by Lemmas \ref{l:cs} and \ref{l:cc}  we have that $S_j$ is projectively equivalent to the curve induced by  $xy^2-z^3$ and the group $G_0$ is virtually cyclic. If $S_j$ is non- singular by the Riemann Hurwitz theorem and Lemma \ref{l:fp} we deduce that $S_j$ is either a line or a copy of the Veronese curve, this shows Theorem \ref{t:main1}.

In order to prove Theorem \ref{t:main2}. Observe, the previous argument also shows that if $G$ is non-virtually cyclic, then each connected component of $S$ is either a line or a copy of the Veronese curve. Let us assume that  $S$ contains at least two irreducible component, say $S_0$ and $S_1$, also let us assume that  $S_0$ is projectively equivalent   to the Veronese curve, then $S_0\cap S_1$ is finite, non-empty and $G_0$ invariant. On the other hand, since $G_0$ leaves invariant $S_0$ and $S_0$ is biholomorphicaly equivalent to the sphere Lemma \ref{l:fp} ensures that $G_0$ is virtually commutative, which is a contradiction. Thus if $S$ contains a Veronese curve the curve is exactly the Veronese curve, if each reducible component is a line then by proposition 5.15 in  \cite{BCN}   the largest number of lines in general position in $S$ is 3.

Now the proofs of Theorem \ref{t:main3} and Corollary \ref{c:main} are trivial, so we omit it  here.$\square$\\

It would be interesting to understand  how this result  changes  in the higher dimensional setting as well as in the case   when  we consider  smooth manifold or real algebraic manifold as the  invariant sets.
	
	
	\section*{Acknowledgments}
	
	The authors would like to thank to the UCIM UNAM and
	their people for its hospitality and kindness during the writing
	of this paper. We also grateful to J. F. Estrada and J. J. Zacarías   for fruitful conversations.

	\bibliographystyle{amsplain}

\end{document}